\newtheorem{thm}{Theorem}[section]
\newtheorem{prop}[thm]{Proposition}
\newtheorem{lem}[thm]{Lemma}
\theoremstyle{definition}
\newtheorem{exmp}[thm]{Example}
\theoremstyle{remark}
\newtheorem{rmk}[thm]{Remark}
\let\c@equation\c@thm
\newcommand{\cE}{\mathcal{E}}
\newcommand{\cO}{\mathcal{O}}
\newcommand{\cQ}{\mathcal{Q}}
\newcommand{\cR}{\mathcal{R}}
\newcommand{\cS}{\mathcal{S}}
\newcommand{\cT}{\mathcal{T}}
\newcommand{\bZ}{\mathbf{Z}}
\newcommand{\dsum}{\bigoplus}
\newcommand{\RN}[1]{%
  \textup{\uppercase\expandafter{\romannumeral#1}}%
  }
\numberwithin{equation}{section}
\newcommand{\arxiv}[1]{\href{http://arxiv.org/abs/#1}{{\tt arXiv:#1}}}
\title{Minimal Set of Generators of Ideals Defining Nilpotent Orbit Closures}
\author{Hang Huang}
\date{September 2018}
\begin{document}

\begin{abstract}
Over a field of characteristic $0$, we construct a minimal set of generators of the defining ideals of closures of nilpotent conjugacy class in the set of $n \times n$ matrices. This modifies a conjecture of Weyman and provides a complete answer to it.
\end{abstract}

\maketitle

\section{Introduction}
Let $X$ be the set of $n \times n$ matrices over a field $K$ of characteristic $0$. We consider $\cO_{\mu}$, which is the conjugacy class of nilpotent matrices in $X$ with Jordan blocks of size $\mu_1,\ldots,\mu_s$. Here $\mu$ is a partition of $n$. Let $X_{\mu}$ be the Zariski closure of $\cO_{\mu}$. It is called the nilpotent orbit closure variety. When $\mu = (n)$, Kostant showed in his paper \cite{K} that the radical ideal of $X_{\mu}$ is generated by the invariants of the conjugation action of $GL(n)$ on $X$. In \cite{DP}, DeConcini and Procesi conjectured a generating set of ideals defining $X_{\mu}$. Later in \cite{T}, Tanisaki proposed a simpler generating set. It was generalized to rank varieties by Eisenbud and Saltman in \cite{ES}. In 1989, Weyman showed all these conjectures hold using geometric methods in \cite{Weyman}. In the same paper, he conjectured a minimal generating set for these ideals. Later in \cite{weyman2}, he showed in details that his conjecture is valid for all rectangular partitions. However, in 2008, in the process of searching for a minimal generating set for De Concini-Procesi ideals of schematic intersections $X_{\mu} \cap D$ where $D$ is the set of diagonal matrices, Biagioli, Faridi and Rosas found counterexamples of Weyman's conjecture in \cite{BFR}. They gave a modified version of Weyman's conjecture which consists of questions about redundant generators.

In this paper we completely solve Weyman's original question about minimal generating set of ideals defining $X_{\mu}$. And therefore we also provide answers to some of the questions in \cite{BFR}.

Our paper is organized as follows. In Section~{\ref{sec:prelim}}, we provide necessary background and summary of Weyman's geometric methods used in \cite{Weyman}. This overlaps a lot with \cite[Section 0]{weyman2}. However, we put it here for the sake of completeness. In Section~{\ref{sec:main}}, we prove our main theorem.

\subsection*{Acknowledgements.} 

The author would like to send thanks to Steven Sam for bringing a related question which leads to this paper. The author would also like to send special thanks to him for all the fruitful conversations during the writing and conception of this work.

\section{Preliminaries and Notations} \label{sec:prelim}

\subsection{Nilpotent Orbit Closures}

Let $K$ be the underlying field. Throughout the paper, we assume the characteristic of $K$ is $0$. Let $E$ be a vector space of dimension $n$ over $K$. We choose a basis $\{ e_1,e_2,\ldots,e_n \}$ of $E$ and write the elements of $X$ in the form
\[
\phi = \sum_{i,j = 1}^{n} \phi_{i,j} e_j^* \otimes e_i
\]
where $\phi \colon E \rightarrow E$. Using this, we identify $X$ with $Hom_K (E,E) = E^* \otimes E$.

The general linear group $GL(n)$ acts by conjugation on $X$. Fixing a partition $\mu = (\mu_1,\ldots,\mu_s)$, we denote the conjugacy class of nilpotent matrices in $X$ with Jordan blocks of sizes $\mu_1,\ldots,\mu_s$ by $\cO_{\mu}$ and its closure in $X$ by $X_{\mu}$. In other words, if we denote the dual partition of $\mu$ by $\mu^T$, then $\cO_{\mu}$ consists of all endomorphisms $\phi$ such that dim Ker $\phi^i = \mu_1^T + \ldots + \mu_i^T$. Here $\mu_i^T$ is defined by $\mu_i^T = \# \{ j \mid \mu_j \geq i \}$. In addition, we denote $l(\mu) = $ the largest $i$ such that $\mu_i \neq 0$. We also denote the radical ideal of $X_{\mu}$ by $J_{\mu}$. For an arbitrary endomorphism $\Phi$, the coordinate functions $\Phi_{i,j}$ are the generators of the coordinate ring of $X$, which we denote by $A = K[X] = Sym(E \otimes E^*)$.

Let $I,J$ be two subsets of $\{1,\ldots,n\}$ of cardinality $r$. Let $(I|J)$ denote the determinant of the $r \times r$ submatrix of $\Phi$ with the rows from $I$ and columns from $J$. We also denote $S_{\mu}$ the Schur module corresponding to the highest weight $\mu$ for the group $GL(n)$. Here $\mu = (\mu_1, \ldots, \mu_n)$ is a dominant integral weight where $\mu_i \in \bZ$ and $\mu_1 \geq \ldots \geq \mu_n$.

Fix a number $p$ such that $1 \leq p \leq n$. We can look at the linear span of $p \times p$ minors of the matrix $\Phi$, which can be identified with $\bigwedge^p E \otimes \bigwedge^p E^*$. Using the Littlewood-Richardson rule, we can decompose $\bigwedge^p E \otimes \bigwedge^p E^*$ into the following irreducible representation of $GL(E)$:
\[
\bigwedge^p E \otimes \bigwedge^p E^* = \dsum_{j=0}^{\min (p,n-p)} S_{(1^j,0^{n-2j},(-1)^j)} E.
\]
We denote the summand $S_{(1^j,0^{n-2j},(-1)^j)} E$ inside of $\bigwedge^p E \otimes \bigwedge^p E^*$ by $U_{j,p}$. Moreover, we denote the subspace of $\bigwedge^p E \otimes \bigwedge^p E^*$ equal to $U_{0,p} \dsum U_{1,p} \dsum \ldots \dsum U_{i,p}$ by $V_{i,p}$. In other words, if we fix two multi-indices $P = (p_1,\ldots,p_i)$ and $Q = (q_1,\ldots,q_i)$ and consider the sum of minors
\begin{equation} \label{eqn:v_i,p}
\sum_{| J | = p-i} (P,J|Q,J).
\end{equation}
Then $V_{i,p}$ is the span of such elements for different choices of $P$ and $Q$.

It is well known that the ideal of polynomial functions vanishing on nilpotent cone in $X$ is generated by the invariants $t_p = U_{0,p} = V_{0,p}$ of the action of $GL(n)$ on $X$. Here $t_p$ can be defined as the degree $(n-p)$-th coefficient of the characteristic polynomial of $\phi$.

In \cite{Weyman} Weyman proved the following theorem about a small but non minimal set of generators of $J_{\mu}$.

\begin{thm} \label{thm:generator}
The ideal $J_{\mu}$ is generated by the spaces $U_{i,\mu (i)}$ where $1 \leq i \leq n$ and $\mu (i) = \mu_1 + \ldots + \mu_i - i + 1$ (which are zero if $i > \min (\mu (i), n - \mu (i))$), and by the spaces $U_{0,p}$ (for $1 \leq p \leq \mu_1$) which correspond to the invariants $t_p$.
\end{thm}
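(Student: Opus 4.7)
The plan is to follow Weyman's geometric method applied to a Springer-type desingularization of $X_\mu$. First, I would take $Y$ to be the partial flag variety of flags $F_\bullet\colon 0 = F_0 \subset F_1 \subset \cdots \subset F_{\mu_1} = E$ with $\dim F_k = \mu_1^T + \cdots + \mu_k^T$, and set
\[
Z = \{(\phi, F_\bullet) \in X \times Y : \phi(F_k) \subseteq F_{k-1} \text{ for every } k \geq 1\}.
\]
Then $Z$ is the total space of a $GL(E)$-equivariant subbundle $\eta$ of the trivial bundle $(E \otimes E^*) \times Y$. The first projection $q \colon Z \to X$ has image exactly $X_\mu$; over a generic point of $\cO_\mu$ the fiber is the single kernel flag $F_k = \ker \phi^k$, so $q$ is a desingularization. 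The standard rationality result for Springer-type resolutions then gives $Rq_*\cO_Z = \cO_{X_\mu}$.

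Next, I would feed this into Weyman's geometric technique. Letting $\xi = ((E \otimes E^*) \times Y) / \eta$ denote the quotient bundle, the Koszul resolution of $\cO_Z$ on $X \times Y$, pushed down via $q$, produces a $GL(E)$-equivariant free resolution of $K[X_\mu]$ over $A$ whose $i$-th term is
\[
F_i = \bigoplus_{j \geq 0} H^j\bigl(Y, \textstyle\bigwedge^{i+j} \xi^*\bigr) \otimes_K A(-i-j).
\]
In particular $J_\mu$ is generated as an ideal by $F_1 = \bigoplus_{j \geq 0} H^j(Y, \bigwedge^{j+1}\xi^*)$, embedded as a $GL(E)$-subrepresentation of $A$.

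To pin down $F_1$, I would use the $GL(E)$-equivariant filtration on $\xi^*$ with associated graded
\[
\mathrm{gr}(\xi^*) = \bigoplus_{k=1}^{\mu_1} (F_k/F_{k-1}) \otimes (E/F_{k-1})^*.
\]
Bott's algorithm on $Y$ then computes $H^j(Y, \bigwedge^{j+1}\xi^*)$: for each weight appearing in these exterior powers, one applies the shifted Weyl group action and reads off the surviving Schur module (or $0$). The $j=0$ summands should produce the trace-type $GL$-invariants $U_{0,p}$ for $1 \leq p \leq \mu_1$, while for $j \geq 1$ the surviving cohomology should consist of exactly the Schur modules of highest weight $(1^i, 0^{n-2i}, (-1)^i)$ in polynomial degree $\mu(i) = \mu_1 + \cdots + \mu_i - i + 1$, i.e.\ the spaces $U_{i, \mu(i)}$.

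The hard part will be the Bott-theoretic bookkeeping in the final step: enumerating the dominant weights arising in $\bigwedge^\bullet \mathrm{gr}(\xi^*)$, chasing each through the Weyl dot-action to distinguish vanishing from surviving cohomology, and matching what remains precisely with the listed $U_{i,\mu(i)}$ and $U_{0,p}$ in precisely the claimed degrees. The specific choice of flag dimensions $\mu_1^T + \cdots + \mu_k^T$ is essential here: any deviation either introduces spurious cohomology or fails to cover all generators. As a consistency check, all the claimed generators must vanish on $X_\mu$; this follows from the rank conditions $\mathrm{rank}(\phi^i) \leq n - \sum_k \min(\mu_k, i)$ encoded by sum-of-minors expressions of the shape \eqref{eqn:v_i,p}.
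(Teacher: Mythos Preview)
Your overall framework is the one the paper outlines (citing \cite{Weyman}): the same partial-flag desingularization $Z \to X_\mu$, the same application of the geometric technique to obtain $F_i = \bigoplus_j H^j(Y,\bigwedge^{i+j}\xi^*)\otimes A(-i-j)$, and then reading the generators of $J_\mu$ off from $F_1$. Two points in your sketch need adjustment, though.

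First, computing $H^j(Y,\bigwedge^{j+1}\xi^*)$ by applying Bott to the associated graded of the full filtration on $\xi^*$ glosses over a real difficulty: $\xi^*$ is not completely reducible on $Y$, so passing to $\mathrm{gr}(\xi^*)$ only gives a spectral sequence, and you would have to control its differentials. The paper's outline (following \cite{Weyman}) avoids this by doing induction on $\mu_1$ instead, using only the two-step short exact sequence
\[
0 \longrightarrow \cR_1 \otimes E^* \longrightarrow \cS_\mu \longrightarrow \cS_{\hat\mu}(\cQ_1,\cQ_1^*) \longrightarrow 0
\]
and pushing down from a single Grassmannian at each step; this is what makes it possible to identify where each cohomology contribution lands inside $A$.

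Second, your expectation that the surviving cohomology in $F_1$ is \emph{exactly} $\{U_{0,p}\}_{1\le p\le \mu_1}\cup\{U_{i,\mu(i)}\}$ is too strong, and this is the real gap. Since $F_\bullet$ is minimal, $F_1$ is the space of \emph{minimal} generators of $J_\mu$; but Theorem~\ref{thm:generator} does not assert minimality, and indeed the whole point of the present paper (Theorem~\ref{thm:main}) is that many of the listed $U_{i,\mu(i)}$ are redundant. Accordingly, Weyman's argument does not compute $F_1$ on the nose: it shows only that every contribution to $F_1$ is \emph{contained in} the ideal generated by the listed spaces (this is what is recorded here as Lemma~\ref{lem:contribution}). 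Together with the easy direction that the listed spaces vanish on $X_\mu$, that containment is what proves they generate $J_\mu$.
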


The proof is based on the geometric methods in the paper \cite{Weyman} which we will simply describe.

\subsection{Geometric Methods for Calculating Syzygies} \label{sec:geometric}

Let $\mu^T = (\mu_1^T,\ldots,\mu_t^T)$ be the partition conjugate to $\mu$. Let $G/P_{\mu^T}$ be the partial flag variety whose points are given by partial flags $(R_1 \subset \ldots \subset R_t = E )$ with $\dim R_i = \mu_1^T + \ldots + \mu_i^T$, and 
\[
Y_{\mu} = \{ (\phi,R_1,\ldots,R_t) \in X \times G/P_{\mu^T} \mid \phi (R_i) \subset R_{i-1} \mbox{ for } i = 1, \ldots, t  \}.
\]
Then $Y_{\mu}$ is a desingularization of $X_{\mu}$ by the first projection map. We denote the tautological vector bundle on $G/P_{\mu^T}$ of dimension $\mu_1^T + \ldots + \mu_i^T$ by $\cR_i$ and its corresponding quotient bundle by $\cQ_i$. Since $Y_{\mu}$ is the total space of the cotangent bundle on $G/P_{\mu^T}$, if we denote by $\cT_{\mu}$ the tangent bundle on $G/P_{\mu^T}$, then we have the following short exact sequence of vector bundles on $G/P_{\mu^T}$
\[
0 \longrightarrow \cS_{\mu} \longrightarrow E \otimes E^* \otimes \cO_{G/P_{\mu^T}} \longrightarrow \cT_{\mu} \longrightarrow 0.
\]
In terms of tautological bundles $\cS_{\mu}$ can be described as
\[
\cS_{\mu} = \cR_1 \otimes \cQ_0^* + \cR_2 \otimes \cQ_1^* + \ldots + \cR_t \otimes \cQ_{t-1}^*
\]
where $\cR_t = E \times G/P_{\mu^T}$ and $\cQ_0 = E^* \times G/P_{\mu^T}$. Note that the sum here is not direct sum and the summands are generally not subbundles of $S_{\mu}$. 

All the above constructions are compatible with base change. Hence it makes sense to replace $E$ by a locally free sheaf $\cE$ on a scheme and consider all the constructions in the relative situation.

Applying the geometric method in \cite[Theorem 5.1.2]{weyman1} and \cite[Theorem 5.1.3]{weyman1} to this desingularization, we have the following theorem

\begin{thm}
The varieties $X_{\mu}$ are normal, with rational singularities. Furthermore, the minimal free resolution of the coordinate ring $K[X_{\mu}]$ as an $A$-module has terms
\[
\ldots \longrightarrow F_i^{\mu} \longrightarrow \ldots \longrightarrow F_1^{\mu} \longrightarrow F_0^{\mu} \longrightarrow 0
\]
where
\[
F_i^{\mu} = \dsum_{j \geq 0} H^j (G/P_{\mu^T},\bigwedge^{i+j} \cS_{\mu}) \otimes_K A(-i-j)
\]
where $A(-d)$ is the homogeneous free $A$-module with a single generator of degree $d$.
\end{thm}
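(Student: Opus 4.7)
The plan is to apply Weyman's geometric method (\cite[Theorems 5.1.2 and 5.1.3]{weyman1}) to the desingularization $p \colon Y_{\mu} \to X_{\mu}$ just constructed, using all the tautological bundles and short exact sequences already set up above. Since the structural data is in place, the proof reduces to verifying the hypotheses of those two theorems for this particular desingularization.

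First I would check that $p$ is a proper birational morphism from a smooth variety onto $X_{\mu}$. Smoothness of $Y_{\mu}$ is immediate from its description as the total space of the vector bundle $\cS_{\mu}$ over the smooth projective flag variety $G/P_{\mu^T}$. Properness follows from projectivity of $G/P_{\mu^T}$ together with the fact that $p$ is a composition of the closed embedding $Y_{\mu} \hookrightarrow X \times G/P_{\mu^T}$ and the projection onto $X$. For birationality, a nilpotent $\phi \in \cO_{\mu}$ recovers the flag from its own kernel filtration $R_i = \ker\bigl(\phi^{\mu_1^T + \cdots + \mu_i^T}\bigr)$, showing $p$ is an isomorphism over the open dense orbit $\cO_{\mu}$.

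Next, Weyman's Theorem 5.1.2 applied to this setup yields a Koszul-type resolution of $\cO_{Y_{\mu}}$ on $X \times G/P_{\mu^T}$ (coming from the tautological regular section of the pullback of $\cT_{\mu}$, whose vanishing locus is exactly $Y_{\mu}$). Pushing down to $X$ and processing the hypercohomology spectral sequence via the projection formula rewrites the result as a complex of free $A$-modules with $i$-th term
\[
F_i^{\mu} = \dsum_{j \geq 0} H^j\bigl(G/P_{\mu^T}, \bigwedge^{i+j} \cS_{\mu}\bigr) \otimes_K A(-i-j),
\]
whose cohomology computes $R^{\bullet}p_{*}\cO_{Y_{\mu}}$. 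To conclude that $F_{\bullet}^{\mu}$ is genuinely a minimal free resolution of $K[X_{\mu}]$, I would invoke Theorem 5.1.3 of \cite{weyman1}: the needed input is vanishing of the would-be negative terms, equivalently $H^j(G/P_{\mu^T}, \bigwedge^{j-1}\cS_{\mu}) = 0$ for $j \geq 1$. With this vanishing, one gets $R^{>0} p_{*} \cO_{Y_{\mu}} = 0$ and $p_{*} \cO_{Y_{\mu}}$ is the cokernel of $F_1^{\mu} \to F_0^{\mu} = A$, forcing $p_{*} \cO_{Y_{\mu}} = K[X_{\mu}]$. Combined with smoothness of $Y_{\mu}$, this gives both normality and rational singularities of $X_{\mu}$; minimality of $F_{\bullet}^{\mu}$ is automatic in Weyman's construction since the differentials land in the graded maximal ideal of $A$.

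The main technical obstacle is precisely the vanishing in negative degrees of $F_{\bullet}^{\mu}$. For this desingularization it can be handled either by Bott/Kempf-type vanishing on the partial flag variety $G/P_{\mu^T}$ applied to the $GL(n)$-equivariant exterior powers of $\cS_{\mu}$, or more geometrically via a Grauert--Riemenschneider argument, using that $Y_{\mu}$ is smooth and $p$ is a resolution of singularities, to obtain $R^{i}p_{*}\omega_{Y_{\mu}} = 0$ for $i>0$ and then translating this to vanishing of $R^{i}p_{*}\cO_{Y_{\mu}}$. Once this one input is secured, every other step in the proof is a formal consequence of the geometric-methods package.
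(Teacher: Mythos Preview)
Your approach is exactly what the paper does: it simply states that the theorem follows by applying \cite[Theorems 5.1.2 and 5.1.3]{weyman1} to the desingularization $Y_{\mu} \to X_{\mu}$, without spelling out the verifications you have outlined. One small slip worth correcting: for $\phi \in \cO_{\mu}$ the flag is recovered as $R_i = \ker(\phi^{i})$ (which has dimension $\mu_1^T + \cdots + \mu_i^T$ by definition of $\cO_{\mu}$), not as $\ker\bigl(\phi^{\mu_1^T + \cdots + \mu_i^T}\bigr)$.
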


Let $\hat{\mu}$ be the partition $\hat{\mu} = (\max(\mu_1 - 1,0),\max(\mu_2 -1,0),\ldots,\max(\mu_s-1,0))$. Then $\hat{\mu}^T = (\mu_2^T,\ldots,\mu_t^T)$. We will have the following short exact sequence
\[
0 \longrightarrow \cR_1 \otimes E^* \longrightarrow \cS_{\mu} \longrightarrow \cS_{\hat{\mu}} (\cQ_1,\cQ_1^*) \longrightarrow 0
\]
where $\cS_{\hat{\mu}} (\cQ_1,\cQ_1^*)$ is the bundle $\cS_{\hat{\mu}}$ constructed in a relative situation with $\cQ_1$ replacing $E$.

In order to get minimal set of generators of $X_{\mu}$, we can estimate the terms of $F_{\bullet}^{\mu}$ inductively. We do induction on $\mu_1$.

Consider the minimal free resolution of $F^{\hat{\mu}} (\cQ_1,\cQ_1^*)_{\bullet}$ in the relative situation with $\cQ_1$ replacing $E$. And we view this resolution as the sequence of $Sym(\cQ_1 \otimes \cQ_1^*)$-modules. To give an upper bound on the terms in $F_{\bullet}^{\mu}$, we look for all representations $S_{\lambda} \cQ_1$ in $F^{\hat{\mu}} (\cQ_1,\cQ_1^*)_{\bullet}$ and compute the push down $K_{\lambda} (E,E^*)$ of the complex $S_{\lambda} \cQ_1 \otimes \bigwedge^{\bullet} (\cR_1 \otimes E^*)$.

We can find the following theorem in section 3 of \cite{Weyman}.

\begin{thm}
Let $S_{\lambda} \cQ_1$ be the term in $F^{\hat{\mu}} (\cQ_1,\cQ_1^*)_{\bullet}$ of homological degree $i$. Then all terms coming from $K_{\lambda} (E,E^*)$ are in homological degree $\geq i$. So they can only give contribution to the terms $F_j^{\mu}$ where $j \geq i$. 
\end{thm}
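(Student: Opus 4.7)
My plan is to prove the theorem by a Leray-style spectral sequence analysis built from the short exact sequence of bundles
\[
0 \longrightarrow \cR_1 \otimes E^* \longrightarrow \cS_{\mu} \longrightarrow \cS_{\hat{\mu}}(\cQ_1,\cQ_1^*) \longrightarrow 0
\]
combined with the factorization of the structure map as
\[
G/P_{\mu^T} \xrightarrow{\ \tau\ } \Gr(\mu_1^T,E) \xrightarrow{\ \sigma\ } \mathrm{pt},
\]
where the first leg $\tau$ has partial-flag fibers in $\cQ_1$ modeled on $\hat{\mu}^T$. The full cohomology $H^*(G/P_{\mu^T},\bigwedge^p \cS_{\mu})$ — which governs $F^{\mu}_{\bullet}$ — will be computed by pushing down in two stages, and the claimed degree shift will fall out once I match each contribution against the homological grading of $F^{\hat{\mu}}(\cQ_1,\cQ_1^*)_{\bullet}$.

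First I would pass to the associated graded of the filtration on $\bigwedge^p \cS_{\mu}$ induced by the short exact sequence; the $k$-th piece is $\bigwedge^k(\cR_1 \otimes E^*) \otimes \bigwedge^{p-k}\cS_{\hat{\mu}}(\cQ_1,\cQ_1^*)$. Since $\cR_1$ is pulled back from the Grassmannian, the projection formula yields
\[
R^{l_2}\tau_*\bigl(\bigwedge^k(\cR_1 \otimes E^*) \otimes \bigwedge^{p-k}\cS_{\hat{\mu}}(\cQ_1,\cQ_1^*)\bigr) \;\cong\; \bigwedge^k(\cR_1 \otimes E^*) \otimes R^{l_2}\tau_*\bigl(\bigwedge^{p-k}\cS_{\hat{\mu}}(\cQ_1,\cQ_1^*)\bigr).
\]
Applying the geometric method fiberwise to $\tau$ identifies the second factor as the source from which $F^{\hat{\mu}}(\cQ_1,\cQ_1^*)_{\bullet}$ is assembled: a representation $S_{\lambda}\cQ_1$ that sits in homological degree $i$ of $F^{\hat{\mu}}$ appears inside $R^{l_2}\tau_*\bigl(\bigwedge^{p-k}\cS_{\hat{\mu}}(\cQ_1,\cQ_1^*)\bigr)$ exactly when $(p-k) - l_2 = i$.

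Next I push down along $\sigma$. A summand $S_{\lambda}\cQ_1 \otimes \bigwedge^k(\cR_1 \otimes E^*)$ whose Grassmannian cohomology lives in degree $l_1$ feeds into $H^{l_1+l_2}(G/P_{\mu^T},\bigwedge^p \cS_{\mu})$ and hence contributes to $F^{\mu}_{p - l_1 - l_2} = F^{\mu}_{k + i - l_1}$. Thus the shift from $i$ is $k - l_1$, and the desired inequality $j \geq i$ reduces to the Bott-type vanishing
\[
H^{l_1}\bigl(\Gr(\mu_1^T,E),\; \bigwedge^k(\cR_1 \otimes E^*) \otimes S_{\lambda}\cQ_1\bigr) = 0 \qquad \text{whenever } l_1 > k.
\]

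This cohomological vanishing is the main obstacle and the heart of the proof. I would attack it by decomposing $\bigwedge^k(\cR_1 \otimes E^*) = \dsum_{|\nu|=k} S_{\nu}\cR_1 \otimes S_{\nu^T} E^*$ via Cauchy's identity and invoking Borel--Weil--Bott on each summand $S_{\nu}\cR_1 \otimes S_{\lambda}\cQ_1$: when nonzero, $l_1$ is the length of the shortest coset representative in $S_n/(S_{\mu_1^T} \times S_{n-\mu_1^T})$ that sorts $(\nu,\lambda) + \rho$ into strictly decreasing order. The required inequality $l_1 \leq |\nu|$ becomes a combinatorial count of the inversions introduced when interleaving the $\nu$-block of length $\mu_1^T$ with the fixed $\lambda$-block; it uses the specific shape of the $\lambda$'s arising from $F^{\hat{\mu}}$ (possibly after tracking cancellations from the Koszul differentials that kill apparent $l_1 > k$ contributions in individual summands). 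This combinatorial check is where I expect the technical work to concentrate, with the spectral-sequence setup above playing a purely organizational role.
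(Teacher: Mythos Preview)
The paper does not prove this theorem at all: it simply records it as a result taken from Section~3 of \cite{Weyman} and moves on. So there is no ``paper's own proof'' to compare your attempt against; what you have written is an outline of how one would reprove the cited result.

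Your structural reduction is correct and is the natural one. Filtering $\bigwedge^{p}\cS_{\mu}$ by the short exact sequence, pushing first along the partial-flag fibration $\tau$ (where the projection formula applies because $\cR_{1}\otimes E^{*}$ is pulled back from the Grassmannian) and then along $\sigma$, and matching homological degrees exactly as you did, one indeed arrives at $j = i + (k - l_{1})$. So the theorem is equivalent to the Grassmannian vanishing you isolate:
\[
H^{l_{1}}\bigl(\Gr(\mu_{1}^{T},E),\; \bigwedge^{k}(\cR_{1}\otimes E^{*})\otimes S_{\lambda}\cQ_{1}\bigr)=0 \quad\text{for } l_{1}>k.
\]

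The genuine gap is here, and your own hedging already anticipates it. This vanishing is \emph{false} for arbitrary dominant $\lambda$, even among weights of total degree zero. For instance on $\Gr(2,4)$ with $\nu=(1,0)$ (so $k=1$) and $\lambda=(3,-3)$, Borel--Weil--Bott gives nonzero cohomology in degree $l_{1}=2>1$. Hence the ``combinatorial check'' you defer cannot be a uniform inequality $l_{1}\le|\nu|$ proved summand by summand via Cauchy; one must either (a) exploit the specific constraints on the weights $\lambda$ that actually occur in $F^{\hat{\mu}}(\cQ_{1},\cQ_{1}^{*})_{\bullet}$, or (b) argue with the full Koszul differential so that spurious negative-degree contributions cancel in the minimal complex $K_{\lambda}$. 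In \cite{Weyman} this is handled by a careful Bott-theoretic analysis of exactly which $\lambda$ arise and how the associated complexes $K_{\lambda}$ behave; your outline correctly identifies that this is where the real work sits, but as written it stops short of supplying it.
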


According to this theorem, we have the following lemma.

\begin{lem} \label{lem:contribution}
The term $F_0^{\mu}$ in $F^{\mu}_{\bullet}$ consists of a single copy of $A$ in homogeneous degree $0$. By \cite[Theorem 3.12]{Weyman}, the term $F_1^{\mu}$ in $F^{\mu}_{\bullet}$ consists of two types:

\begin{enumerate}
    \item We have only the trivial representation in $F^{\hat{\mu}}(\cQ_1,\cQ_1^*)_{0}$. The corresponding push down $K_{(0)}$ of $\bigwedge^{\bullet} (\cR_1 \otimes E^*)$ gives a possible contribution to $F_1^{\mu}$ which is the vanishing of $n - \mu_1 + 1$ minors of the matrix $\Phi$.
    \item For each term $S_{\lambda} \cQ_1$ in $F^{\hat{\mu}} (\cQ_1,\cQ_1^*)_{1}$, it gives a possible contribution of the zero'th term of the corresponding complex $K_{\lambda}$. For the term $S_{(1^i,0^{n - \mu^T_1 - 2i},(-1)^i)} \cQ_1$ in homogeneous degree $\hat{\mu}(i)$ and homological degree 1 in particular, the image of its contribution in $F_0^{\mu}$ is contained in the ideal
    \[
    <U_{0,1},\ldots,U_{0,\mu_1},U_{1,\mu(1)},\ldots,U_{i,\mu(i)}> = <U_{0,1},\ldots,U_{0,n},V_{1,\mu(1)},\ldots,V_{i,\mu(i)}>.
    \]
\end{enumerate}
\end{lem}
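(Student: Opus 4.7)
The plan is to apply \cite[Theorem 3.12]{Weyman} together with the inductive short exact sequence
\[
0 \to \cR_1 \otimes E^* \to \cS_\mu \to \cS_{\hat\mu}(\cQ_1, \cQ_1^*) \to 0
\]
to control the low homological degree terms of $F_\bullet^\mu$, proceeding by induction on $\mu_1$. First, I would handle $F_0^\mu$ directly: by the geometric formula $F_0^\mu = \bigoplus_{j \geq 0} H^j(G/P_{\mu^T}, \wedge^j \cS_\mu) \otimes A(-j)$, only the $j = 0$ term contributes and it gives $H^0(G/P_{\mu^T}, \cO) = K$; equivalently, $K[X_\mu]$ is a cyclic $A$-module, so its minimal free resolution begins with a single copy of $A$ in degree $0$.

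Next, I would use the cited Theorem 3.12 to reduce $F_1^\mu$ to contributions coming only from terms $S_\lambda \cQ_1$ of $F^{\hat\mu}(\cQ_1, \cQ_1^*)_i$ with $i \leq 1$, since such a term contributes only to $F_j^\mu$ with $j \geq i$. By the inductive hypothesis the homological degree $0$ part of $F^{\hat\mu}$ is the trivial representation; by Theorem~\ref{thm:generator} applied to $\hat\mu$, the degree $1$ part consists of $S_{(1^i, 0^{n - \mu_1^T - 2i}, (-1)^i)} \cQ_1$ in degrees $\hat\mu(i)$, together with trivial-representation summands coming from $U_{0, p}(\cQ_1)$ that push down into $\langle U_{0,1}, \ldots, U_{0, \mu_1}\rangle$ and so can be absorbed into the ideal appearing in Part (2).

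For Part (1), I would identify $K_{(0)}$ with the pushdown of $\wedge^\bullet(\cR_1 \otimes E^*)$ along $G/P_{\mu^T} \to \mathrm{pt}$ using the Cauchy decomposition $\wedge^k(\cR_1 \otimes E^*) = \bigoplus_{|\alpha| = k} S_\alpha \cR_1 \otimes S_{\alpha^T} E^*$ and Borel-Weil-Bott on the Grassmannian factor. This presents $K_{(0)}$ as an Eagon-Northcott-style complex whose homological degree $1$ term is the $GL(E)$-submodule of $A$ spanned by the $(n - \mu_1 + 1)$-size minors of $\Phi$.

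Part (2) is where I expect the main difficulty. The ideal equality is formal: $V_{j, p} = U_{0, p} \oplus \cdots \oplus U_{j, p}$ by definition, and Newton's identities let one express $U_{0, p}$ for $p > \mu_1$ as polynomial combinations of $U_{0,1}, \ldots, U_{0,\mu_1}$ on $X_\mu$, where $\Phi$ has minimal polynomial of degree $\leq \mu_1$. The hard part will be the containment of the image: I would decompose $S_\lambda \cQ_1 \otimes \wedge^\bullet(\cR_1 \otimes E^*)$ with $\lambda = (1^i, 0^{n - \mu_1^T - 2i}, (-1)^i)$ via Pieri / Littlewood-Richardson, push each irreducible summand $S_\nu \cR_1 \otimes S_\kappa \cQ_1 \otimes \wedge^\bullet E^*$ down by Borel-Weil-Bott, and verify that each resulting $GL(E)$-irreducible has highest weight matching $U_{j, \mu(j)}$ for some $0 \leq j \leq i$ or a trace term $U_{0, p}$ with $p \leq \mu_1$. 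The cascade $U_{0, \bullet}, U_{1, \mu(1)}, \ldots, U_{i, \mu(i)}$ arises because each additional wedge factor in the Koszul filtration reduces the hook index of the resulting $U_{j, \cdot}$ by one while raising the homogeneous degree accordingly. The central technical task is to rule out any Pieri summand that pushes down to a $GL(E)$-irreducible outside the predicted ideal, which requires careful bookkeeping of highest weights under Bott's algorithm.
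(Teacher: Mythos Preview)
The paper does not supply an independent proof of this lemma; it is stated as a direct consequence of the setup in Section~\ref{sec:geometric} together with \cite[Theorem~3.12]{Weyman}, and the displayed ideal equality is implicitly justified by Lemmas~\ref{lem:rel} and~\ref{lem:rel1} (stated immediately afterward). Your sketch is therefore an unpacking of material the paper takes as given, and your overall strategy---induction on $\mu_1$ via the short exact sequence, reduction of $F_1^\mu$ to contributions from homological degrees $\leq 1$ of $F^{\hat\mu}$, and pushdown of $K_\lambda$ via Borel--Weil--Bott---is exactly the framework the paper is invoking from \cite{Weyman}.

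One concrete correction: your justification of the ideal equality in Part~(2) via ``Newton's identities on $X_\mu$'' is not right. The equality
\[
\langle U_{0,1},\ldots,U_{0,\mu_1},U_{1,\mu(1)},\ldots,U_{i,\mu(i)}\rangle = \langle U_{0,1},\ldots,U_{0,n},V_{1,\mu(1)},\ldots,V_{i,\mu(i)}\rangle
\]
is an equality of ideals in $A = K[X]$, not merely modulo $J_\mu$, so an argument valid only on $X_\mu$ does not suffice. The correct route is purely via Lemmas~\ref{lem:rel} and~\ref{lem:rel1}: since $\mu(1)=\mu_1$, the left-hand side contains $V_{1,\mu_1}=U_{0,\mu_1}\oplus U_{1,\mu_1}$, and Lemma~\ref{lem:rel1} then gives $V_{1,p}\supset U_{0,p}$ for every $p\geq \mu_1$; an easy induction on $j$ using $V_{j,\mu(j)}=V_{j-1,\mu(j)}\oplus U_{j,\mu(j)}$ and $\mu(j)\geq\mu(j-1)$ handles the remaining generators. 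Apart from this, your plan is sound and faithful to the argument the paper cites.
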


\begin{section}{Proof of Main Theorem} \label{sec:main}
In the proof of the main theorem, we will use the following two lemmas whose proof could be found in \cite{weyman2}.

\begin{lem} \label{lem:rel}
If char $K = 0$, then $V_{i,p} = \dsum_{j = 0}^{min(i,n-p)} U_{j,p}$.
\end{lem}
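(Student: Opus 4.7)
My plan is to realize $V_{i,p}$ as the image of a natural $GL(E)$-equivariant linear map and then analyse it via Schur's lemma, which is where the characteristic-zero hypothesis enters. Introduce the equivariant map
\[
\psi_i \colon \bigwedge^i E \otimes \bigwedge^i E^* \longrightarrow \bigwedge^p E \otimes \bigwedge^p E^*
\]
obtained by tensoring with the $GL(E)$-invariant identity element $\operatorname{id}_{p-i} = \sum_{|J|=p-i} e_J \otimes e_J^* \in \bigwedge^{p-i} E \otimes \bigwedge^{p-i} E^*$ and then wedging on each factor. On pure tensors $\psi_i$ sends $e_P \otimes e_Q^*$ to a signed version of the generator $\sum_{|J|=p-i}(P,J|Q,J)$ of $V_{i,p}$, so $V_{i,p} = \operatorname{Image}(\psi_i)$.

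Since $\operatorname{char} K = 0$, both source and target decompose multiplicity-freely into the irreducibles $S_{(1^j,0^{n-2j},(-1)^j)} E$, with $j$ running over $0 \le j \le \min(i,n-i)$ in the source and $0 \le j \le \min(p,n-p)$ in the target. By Schur's lemma, $\psi_i$ respects the isotypic decomposition and restricts on each summand $U_{j,i}$ to either the zero map or an isomorphism onto $U_{j,p}$. Under the standing assumption $i \le p$ one has $\min(i,n-i,p,n-p) = \min(i,n-p)$, so this dichotomy immediately yields the upper bound $V_{i,p} \subseteq \dsum_{j=0}^{\min(i,n-p)} U_{j,p}$.

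For the matching lower bound I would iterate the construction: for $0 \le j \le \min(i,n-p)$, introduce the analogous equivariant embedding
\[
\Phi_{j,i} \colon \bigwedge^j E \otimes \bigwedge^j E^* \longrightarrow \bigwedge^i E \otimes \bigwedge^i E^*
\]
defined by the same recipe as $\psi_i$ (with $i$ playing the role of $p$), and verify the key identity
\[
\psi_i \circ \Phi_{j,i} = \binom{p-j}{i-j} \, \Phi_{j,p},
\]
which is a direct combinatorial check: every $(p-j)$-subset $L$ arises from exactly $\binom{p-j}{i-j}$ ordered splittings $L = J \sqcup K$ with $|J|=i-j$ and $|K|=p-i$. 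Because $j \le \min(i,n-p)$ implies $2j \le p + (n-p) = n$, the element $e_{[1,j]} \otimes e_{[n-j+1,n]}^*$ spans the one-dimensional $\lambda$-weight space of $\bigwedge^j E \otimes \bigwedge^j E^*$ for $\lambda = \epsilon_1 + \cdots + \epsilon_j - \epsilon_{n-j+1} - \cdots - \epsilon_n$, and is therefore the highest weight vector of the top isotypic component $U_{j,j}$. Applying $\Phi_{j,p}$ to it yields $\sum_{J} \pm (e_{[1,j]} \wedge e_J) \otimes (e_{[n-j+1,n]} \wedge e_J)^*$ with $J$ running over $(p-j)$-subsets of $\{j+1,\ldots,n-j\}$; this is nonzero exactly when $p-j \le n-2j$, i.e.\ $j \le n-p$. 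The displayed identity then forces $\psi_i$ to be nonzero on $\Phi_{j,i}(U_{j,j}) \subseteq U_{j,i}$, so by Schur's lemma $\psi_i \colon U_{j,i} \to U_{j,p}$ is an isomorphism, giving $U_{j,p} \subseteq V_{i,p}$ for all $j \le \min(i,n-p)$. Combining both inclusions yields $V_{i,p} = \dsum_{j=0}^{\min(i,n-p)} U_{j,p}$.

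The principal obstacle in executing this plan is the sign-bookkeeping needed to verify the identity $\psi_i \circ \Phi_{j,i} = \binom{p-j}{i-j}\Phi_{j,p}$ with correct signs from exterior multiplication; this is routine but tedious. The representation-theoretic skeleton -- an equivariant map between multiplicity-free $GL(E)$-modules controlled by Schur's lemma -- is robust and independent of these signs, so the argument goes through as described.
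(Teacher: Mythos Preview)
The paper does not give its own proof of this lemma; it simply cites \cite{weyman2}. Your argument is correct and is essentially the standard one (and, as far as I can tell, the one in \cite{weyman2}): realise $V_{i,p}$ as the image of the $GL(E)$-equivariant ``multiply by $\mathrm{id}_{p-i}$'' map $\psi_i$, use the multiplicity-free Cauchy decomposition on both sides together with Schur's lemma to reduce the question to deciding, for each $j$, whether $\psi_i|_{U_{j,i}}$ is zero, and settle that by the factorisation $\psi_i\circ\Phi_{j,i}=\binom{p-j}{i-j}\Phi_{j,p}$ and an explicit highest-weight computation. The one place to be slightly more careful is your sentence ``Under the standing assumption $i\le p$ one has $\min(i,n-i,p,n-p)=\min(i,n-p)$'': this is true, but it is worth spelling out the short case check (from $i\le p$ one gets $n-p\le n-i$, and if $n-p<i$ then also $n-p<p$), since the lemma is stated for all $i$ and $p$ without that hypothesis being made explicit. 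The sign issue you flag is genuine but harmless, because the same permutation sign appears on the $\bigwedge^p E$ and the $\bigwedge^p E^*$ factors and cancels; you might just say that in one line rather than leaving it as an ``obstacle.''
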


\begin{lem} \label{lem:rel1}
Let $i \geq 1$, then the space $V_{i,p+1}$ is contained in the ideal generated by $V_{i,p}$.
\end{lem}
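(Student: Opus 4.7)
My plan is to prove this by an explicit Laplace expansion, rewriting each generator of $V_{i,p+1}$ as a polynomial combination of elements of $V_{i,p}$. Since $V_{i,p+1}$ is spanned by the elements
\[
f_{i,p+1}(P,Q) := \sum_{|J|=p+1-i}(P,J|Q,J),
\]
indexed by multi-indices $P=(p_1,\ldots,p_i)$ and $Q=(q_1,\ldots,q_i)$ of size $i$, it suffices to show each such $f_{i,p+1}(P,Q)$ lies in $\langle V_{i,p}\rangle$. My first step would be to expand each minor $(P,J|Q,J)$ by Laplace expansion along the row indexed by $p_1$:
\[
(P,J|Q,J) \;=\; \sum_{l\in Q\cup J}\epsilon_l\,\Phi_{p_1,l}\cdot M_l,
\]
where $M_l$ is the $p\times p$ minor obtained by deleting row $p_1$ and column $l$, and $\epsilon_l$ is the corresponding cofactor sign.

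Next, the resulting terms split into two types based on whether $l\in Q$ or $l\in J$. If $l=q_m\in Q$, then $M_{q_m}$ has row set $\{p_2,\ldots,p_i\}\cup J$ and column set $(Q\setminus\{q_m\})\cup J$, so summing over $J$ gathers these contributions into a term of the form $\pm\,\Phi_{p_1,q_m}\cdot f_{i-1,p}\bigl((p_2,\ldots,p_i)\otimes(Q\setminus\{q_m\})\bigr)$, which lies in $\Phi\cdot V_{i-1,p}$ (and for $i=1$ simply reads $\pm\,\Phi_{p_1,q_1}\cdot t_p$). If instead $l=j\in J$, then a sign-bearing row reordering identifies $M_j$ with $\pm\bigl((j,p_2,\ldots,p_i),J\setminus\{j\}\,|\,Q,J\setminus\{j\}\bigr)$; reparametrizing by $J'=J\setminus\{j\}$ and using the fact that the resulting minor vanishes whenever $j\in J'$ (repeated row), these gather into a term of the form $\pm\,\Phi_{p_1,j}\cdot f_{i,p}\bigl((j,p_2,\ldots,p_i)\otimes Q\bigr)$, lying in $\Phi\cdot V_{i,p}$.

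Finally, Lemma \ref{lem:rel} provides the inclusion $V_{i-1,p}\subseteq V_{i,p}$, so both families of contributions sit inside $\langle V_{i,p}\rangle$; summing them recovers $f_{i,p+1}(P,Q)$, as required. The main obstacle will be the sign bookkeeping in the Laplace expansion together with the row reordering in the second case, but once the signs are handled correctly the argument reduces to a single determinantal identity, and the verification that no residual mismatched-$J$ minors remain after the reparametrization is immediate from the vanishing of determinants with repeated rows or columns.
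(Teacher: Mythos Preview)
Your approach is correct and is essentially the standard argument (the paper itself does not give a proof but defers to \cite{weyman2}, where the proof proceeds exactly by this cofactor expansion). One small remark on the sign bookkeeping you flag: it is cleanest to work with \emph{ordered} multi-indices, so that $(p_1,\ldots,p_i,j_1,\ldots,j_{p+1-i}\mid q_1,\ldots,q_i,j_1,\ldots,j_{p+1-i})$ is alternating in each tuple. Expanding along the first row gives the cofactor sign $(-1)^{1+m}$ for the column $q_m$ (independent of $J$, so these terms sum directly to $\sum_m(-1)^{1+m}\Phi_{p_1,q_m}\,f_{i-1,p}\bigl((p_2,\ldots,p_i),(q_1,\ldots,\hat q_m,\ldots,q_i)\bigr)$), and sign $(-1)^{1+i+s}$ for the column $j_s$; moving $j_s$ from row position $i-1+s$ to the front contributes an additional $(-1)^{i+s}$, so the combined sign is $-1$, independent of $s$ and $J$. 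This makes the ``gather into $f_{i,p}$'' step go through without residue. Your appeal to Lemma~\ref{lem:rel} for $V_{i-1,p}\subset V_{i,p}$ is fine; alternatively one can use the elementary identity $\sum_k f_{i,p}\bigl((P',k),(Q',k)\bigr)=(p-i+1)\,f_{i-1,p}(P',Q')$, which avoids the decomposition into the $U_{j,p}$ but does use $\mathrm{char}\,K=0$.
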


Now we can state our main theorem:

\begin{thm} \label{thm:main}
$J_{\mu}$ is minimally generated by $U_{0,p} (1 \leq p \leq \mu_1)$ and $U_{i,\mu (i)}$ for which the following condition is satisfied
\begin{equation} \label{eqn:main}
\mu(i) < \mu(i-1) + \lfloor \frac{\mu (i-1) - 1}{i - 1} \rfloor.
\end{equation}
\end{thm}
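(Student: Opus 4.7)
My plan is to induct on $\mu_1$, using Weyman's geometric methods from Section~\ref{sec:geometric}. The base case $\mu_1 = 1$ gives $\mu = (1^n)$, where $X_\mu = \{0\}$ and $J_\mu$ is the maximal ideal, minimally generated by $U_{1,1}$; condition (\ref{eqn:main}) is vacuously satisfied for $i = 1$, matching the claim.

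For the inductive step, set $I_{i-1} := \langle U_{0,p}\ (1 \leq p \leq \mu_1),\ U_{1,\mu(1)}, \ldots, U_{i-1, \mu(i-1)}\rangle$ and analyze whether $U_{i, \mu(i)} \subseteq I_{i-1}$. A direct computation using $\hat\mu(i) = \mu(i) - i$ (valid for $i \leq l(\mu)$) shows that condition (\ref{eqn:main}) applied to $\hat\mu$ at index $i$ is equivalent to condition (\ref{eqn:main}) applied to $\mu$ at index $i$. By the inductive hypothesis applied to $\hat\mu$ in the relative situation over $\Gr(\mu_1^T, n)$ with $E$ replaced by the quotient bundle $\cQ_1$, the Schur module $S_{(1^i, 0^{n - \mu_1^T - 2i}, (-1)^i)}\cQ_1$ appears as a minimal generator of $F^{\hat\mu}(\cQ_1, \cQ_1^*)_1$ if and only if (\ref{eqn:main}) holds for $\mu$ at $i$. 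By Lemma~\ref{lem:contribution}(2), its pushdown contributes to $F_1^\mu$ with image in $F_0^\mu = A$ generating $U_{i, \mu(i)}$ modulo $I_{i-1}$; hence $U_{i, \mu(i)}$ is a minimal generator of $J_\mu$. Minimality of each $U_{0,p}$ for $1 \leq p \leq \mu_1$ follows from the algebraic independence of the trace polynomials $t_1, \ldots, t_{\mu_1}$ on the generic matrix.

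For the redundancy direction, assume (\ref{eqn:main}) fails at $i$; we must show $U_{i, \mu(i)} \subset I_{i-1}$. Iterating Lemmas~\ref{lem:rel} and~\ref{lem:rel1} from $U_{i-1, \mu(i-1)} \subset I_{i-1}$ yields $V_{i-1, p} \subset I_{i-1}$ for every $p \geq \mu(i-1)$. Since $V_{i, \mu(i)} = V_{i-1, \mu(i)} \oplus U_{i, \mu(i)}$ as $GL(n)$-modules by Lemma~\ref{lem:rel}, it suffices to establish $V_{i, \mu(i)} \subset I_{i-1}$. This is the main obstacle of the proof: one must produce an explicit expression for a highest-weight vector of $U_{i, \mu(i)}$ as a polynomial combination of highest-weight vectors of $V_{i-1, p}$ (for $p \in [\mu(i-1), \mu(i)]$) times coordinate functions. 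The arithmetic threshold $\lfloor (\mu(i-1) - 1)/(i-1)\rfloor$ in (\ref{eqn:main}) encodes the number of degree-$1$ multiplications needed to synthesize such a vector starting from $V_{i-1, \mu(i-1)}$, with the factor $(i-1)$ in the denominator reflecting the $(i-1)$-tuple structure of the multi-indices parametrizing elements of $V_{i-1, p}$.
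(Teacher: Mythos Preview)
Your proposal has a genuine gap in both directions, and in each case the paper's actual argument hinges on an idea you do not mention: the construction of an auxiliary partition $\tilde\mu$ of $n$ (or of some $m\ge n$) chosen so that Weyman's vanishing criterion (\cite[Lemma~4.1]{Weyman}, restated in the paper as Lemma~\ref{lem:rel2}: $U_{i,p}$ vanishes on $X_\nu$ iff $p\ge\nu(i)$) separates $U_{i,\mu(i)}$ from $I_{i-1}$ or forces it inside.

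\textbf{Minimality.} Your inductive argument via $\hat\mu$ and Lemma~\ref{lem:contribution} cannot yield a lower bound on the generators. Lemma~\ref{lem:contribution} only describes \emph{possible} contributions to $F_1^\mu$ and says that the image of the relevant pushdown is \emph{contained in} the ideal $\langle U_{0,1},\dots,U_{0,\mu_1},U_{1,\mu(1)},\dots,U_{i,\mu(i)}\rangle$; it does not assert that this image is nonzero modulo $I_{i-1}$, nor that cancellation in the spectral sequence does not kill it. So ``hence $U_{i,\mu(i)}$ is a minimal generator'' is a non sequitur. The paper instead proves minimality (Proposition~\ref{lem:minimal}) by writing $\mu(i-1)-1=e(i-1)+f$ and setting
\[
\tilde\mu=\bigl((e+2)^{f},(e+1)^{\,i-1-f},\mu_i+1,\mu_{i+1},\dots,\mu_{s-1},\mu_s-1\bigr),
\]
so that $\tilde\mu(j)\le\mu(j)$ for $j\le i-1$ (hence $I_{i-1}\subset J_{\tilde\mu}$ by Lemmas~\ref{lem:rel}, \ref{lem:rel1}, \ref{lem:rel2}) but $\tilde\mu(i)>\mu(i)$ (hence $U_{i,\mu(i)}$ does \emph{not} vanish on $X_{\tilde\mu}$). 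This is exactly where the arithmetic threshold $\lfloor(\mu(i-1)-1)/(i-1)\rfloor$ enters, and nothing in your sketch supplies it. Your side remark on the $U_{0,p}$ is also insufficient: algebraic independence of $t_1,\dots,t_{\mu_1}$ does not by itself exclude $U_{0,p}$ from the ideal generated by the remaining $U_{0,q}$ together with the $U_{j,\mu(j)}$.

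\textbf{Redundancy.} You correctly identify that one must show $V_{i,\mu(i)}\subset I_{i-1}$, but then stop at ``this is the main obstacle'' and a heuristic about multi-indices. The paper (Proposition~\ref{prop:generate}) does not attempt any explicit highest-weight computation. Instead it sets
\[
\tilde\mu=(\mu_1,\dots,\mu_{i-1},\mu_i^{\,r},f),\qquad n=\mu_1+\dots+\mu_{i-1}+r\mu_i+f,\ 0\le f<\mu_i,
\]
and proves by induction on $\mu_i$ (using Lemma~\ref{lem:contribution} in the correct, generation direction, together with Theorem~\ref{thm:generator}) that $J_{\tilde\mu}=\langle U_{0,1},\dots,U_{0,\mu_1},U_{1,\mu(1)},\dots,U_{i-1,\mu(i-1)}\rangle$. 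The failure of \eqref{eqn:main} is precisely what makes the base case $\mu'=(\mu_1-(\mu_i-1),\dots,\mu_{i-1}-(\mu_i-1),1^r)$ satisfy $\mu'(i)<i$, so $U_{i,\mu'(i)}=0$. Since $\tilde\mu(i)=\mu(i)$, Lemma~\ref{lem:rel2} then gives $U_{i,\mu(i)}\subset J_{\tilde\mu}=I_{i-1}$. The $f=1$ boundary case is handled by passing to a partition of some $m>n$ and restricting variables.

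In short, both halves of the argument are governed by a single device---comparing $\mu$ to carefully built partitions $\tilde\mu$ via Lemma~\ref{lem:rel2}---that your proposal does not contain.
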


\begin{exmp}
The cases when $\mu_1 \leq 2$ or $\mu$ is a hook are previously known and they coincide with Theorem~{\ref{thm:main}}.
\end{exmp}

\begin{exmp}
Let $\mu = (3^2,2^2,1^5)$ and $\mu^T = (9,4,2) = $\ytableausetup{centertableaux}
\begin{ytableau}
\text{ } &  &  &  &  &  &  &  &  \\
 &  &  &  & \none[\uparrow] & \none[\uparrow] & \none[\uparrow] \\
 &  & \none[\uparrow] \\
 \none[\uparrow]
\end{ytableau}.

We label the column of $\mu^T$ by $1,2,\ldots,\mu_1^T$ starting from the leftmost column. Note that $\mu(i) = \mu_1 + \mu_2 + \ldots + \mu_i - (i-1)$ is the same as the total number of boxes in column $\leq i$ minus $(i-1)$. The arrow in column $i$ means exactly that $U_{i,\mu(i)}$ is in the minimal set of generators of $J_{\mu}$.  
In this example, $J_{\mu}$ is minimally generated by $U_{0,1},U_{0,2},U_{0,3},U_{1,3},U_{3,6},U_{5,7},U_{6,7},U_{7,7}$.
\end{exmp}

\begin{exmp}
If $\mu = (4,2^3,1^5)$ and $\mu^T = (9,4,1,1) = $\ytableausetup{centertableaux}
\begin{ytableau}
\text{ } &  &  &  &  &  &  &  &  \\
 &  &  &  & \none[\uparrow] & \none[\uparrow] & \none[\uparrow] \\
 & \none[\uparrow] & \none[\uparrow] \\
  \\ 
\none[\uparrow]
\end{ytableau}.
Then $J_{\mu}$ is minimally generated by $U_{0,1},U_{0,2},U_{0,3},U_{0,4},U_{1,4},U_{2,5},U_{3,6},U_{5,7},U_{6,7},U_{7,7}$.
\end{exmp}

Before the actual proof, we will show the following two propositions first.

\begin{prop} \label{lem:minimal}
If $\mu(i) < \mu(i-1) + \lfloor \frac{\mu(i-1) - 1}{i - 1} \rfloor$ and $U_{i,\mu(i)} \neq \{0\}$, then $U_{i,\mu(i)}$ is not contained in the ideal generated by set of equations $\{ U_{0,1}, \ldots, U_{0,\mu_1}, U_{1,\mu(1)}, \ldots, U_{i-1,\mu(i-1)} \}$ for any $\mu$ a partition of $n = \text{dim }E$.
\end{prop}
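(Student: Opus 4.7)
The plan is to reduce the non-containment claim to a question about the $U_{i,\mu(i)}$-isotypic part of $I$ in degree $\mu(i)$, and then to resolve that question by a $GL(E)$-equivariant computation driven by the combinatorial condition (\ref{eqn:main}).

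First I would establish that $V_{i-1,\mu(i)} \subseteq I$, where $I$ denotes the ideal generated by the listed $U$'s. By induction on $j$, I claim $V_{j,\mu(j)} \subseteq I$ for every $0 \leq j \leq i-1$: the summand $U_{j,\mu(j)}$ of $V_{j,\mu(j)} = \bigoplus_{k \leq j} U_{k,\mu(j)}$ (using Lemma \ref{lem:rel}) is one of the generators, while for $k<j$ each summand $U_{k,\mu(j)}$ sits inside $V_{k,\mu(j)} \subseteq \langle V_{k,\mu(k)} \rangle$ by Lemma \ref{lem:rel1}, and the right-hand side lies in $I$ by the inductive hypothesis. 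A further application of Lemma \ref{lem:rel1} yields $V_{i-1,\mu(i)} \subseteq I_{\mu(i)}$. Since $V_{i,\mu(i)} = V_{i-1,\mu(i)} \oplus U_{i,\mu(i)}$ by Lemma \ref{lem:rel}, the proposition reduces to showing that the composition $U_{i,\mu(i)} \hookrightarrow A_{\mu(i)} \twoheadrightarrow A_{\mu(i)}/(I_{\mu(i)} + V_{i-1,\mu(i)})$ is nonzero.

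Next I would decompose $I_{\mu(i)}$ into the images of its generators via the $GL(E)$-equivariant multiplication maps $U_{0,p} \otimes A_{\mu(i)-p} \to A_{\mu(i)}$ for $1 \leq p \leq \mu_1$ and $U_{j,\mu(j)} \otimes A_{\mu(i)-\mu(j)} \to A_{\mu(i)}$ for $1 \leq j \leq i-1$. By semisimplicity of $GL(E)$-representations, the irreducible $S_{(1^i,0^{n-2i},(-1)^i)} E$ can appear in the image of such a map only if it already appears in the source. Using the Cauchy decomposition $A_d = \bigoplus_{|\lambda|=d} S_\lambda E \otimes S_\lambda E^*$ together with the Littlewood-Richardson rule for products of rational Schur functors, the existence of such a contribution reduces to whether $S_{(1^i,0^{n-2i},(-1)^i)} E$ appears as a summand of $S_{(1^j,0^{n-2j},(-1)^j)} E \otimes S_\lambda E \otimes S_\lambda E^*$ for some partition $\lambda$ with $|\lambda| = \mu(i) - \mu(j)$. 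Care must be taken to track the specific embedding $U_{i,\mu(i)} \subseteq \bigwedge^{\mu(i)} E \otimes \bigwedge^{\mu(i)} E^* \subseteq A_{\mu(i)}$ so that only this copy is relevant.

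Finally I would exploit the combinatorial condition. Rewriting (\ref{eqn:main}) as $\mu_i \leq \lfloor (\mu(i-1)-1)/(i-1) \rfloor$, hence $(i-1)(\mu_i+1) \leq \mu_1 + \cdots + \mu_{i-1}$, gives a tight upper bound on $\mu(i) - \mu(j) = \mu_{j+1} + \cdots + \mu_i - (i-j)$, small enough to preclude any $\lambda$ of that size from supplying the ``gap'' weight needed to promote the highest weight $(1^j, 0^{n-2j}, (-1)^j)$ of $U_{j,\mu(j)}$ to that of $U_{i,\mu(i)}$. The main obstacle will be this last combinatorial step: verifying that no Littlewood-Richardson filling for such a $\lambda$ can produce the required rational Schur component, uniformly over $0 \leq j \leq i-1$. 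This is a delicate check in which the precise form of the floor $\lfloor (\mu(i-1)-1)/(i-1) \rfloor$ plays an essential role, since slightly weaker inequalities admit the counterexamples to Weyman's original conjecture found in \cite{BFR}.
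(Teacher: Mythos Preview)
Your reduction in the first paragraph is fine, but the approach collapses at the final combinatorial step. The assertion that ``no Littlewood--Richardson filling for such a $\lambda$ can produce the required rational Schur component'' is simply false: the isotype $S_{(1^i,0^{n-2i},(-1)^i)}E$ \emph{does} occur in $U_{j,\mu(j)}\otimes A_{\mu(i)-\mu(j)}$ for $j<i$. Already for $j=i-1$ and $\mu_i=2$ one has $A_1=E\otimes E^*$, and the weight $(1^i,0^{n-2i},(-1)^i)-(1^{i-1},0^{n-2i+2},(-1)^{i-1})=e_i-e_{n-i+1}$ is a root of $\mathfrak{sl}(E)$, so the target irreducible appears in $U_{i-1,\mu(i-1)}\otimes\mathfrak{sl}(E)$. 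More structurally, Proposition~\ref{prop:generate} shows that when inequality~(\ref{eqn:main}) is reversed the very same generators \emph{do} produce $U_{i,\mu(i)}$; since the Littlewood--Richardson multiplicities in $U_{j,\mu(j)}\otimes A_{\mu(i)-\mu(j)}$ depend only on $\mu(i)-\mu(j)$ and not on which side of~(\ref{eqn:main}) we are on, a vanishing-of-multiplicity argument cannot possibly distinguish the two cases. You flagged the need to ``track the specific embedding'', but that is precisely what an LR count cannot do, and it is the entire content of the proposition.

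The paper's argument bypasses this difficulty by a geometric test rather than an isotypic one. It constructs an auxiliary partition
\[
\tilde{\mu}=(\underbrace{e+2,\ldots,e+2}_{f},\underbrace{e+1,\ldots,e+1}_{i-1-f},\mu_i+1,\mu_{i+1},\ldots,\mu_{s-1},\mu_s-1),
\]
where $\mu(i-1)-1=e(i-1)+f$ with $0\le f\le i-2$; the first $i-1$ parts are a most-even redistribution of $\mu_1+\cdots+\mu_{i-1}$, so that $\tilde{\mu}(j)\le\mu(j)$ for all $j\le i-1$, while the $i$th part is bumped so that $\tilde{\mu}(i)>\mu(i)$ exactly under hypothesis~(\ref{eqn:main}). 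Then Lemmas~\ref{lem:rel} and~\ref{lem:rel1} show that the ideal $\langle U_{0,1},\ldots,U_{0,\mu_1},U_{1,\mu(1)},\ldots,U_{i-1,\mu(i-1)}\rangle$ vanishes on $X_{\tilde{\mu}}$, whereas Lemma~\ref{lem:rel2} (Weyman's criterion $U_{i,p}|_{X_\nu}=0\iff p\ge\nu(i)$) shows $U_{i,\mu(i)}$ does not, since $\mu(i)<\tilde{\mu}(i)$. This exhibits a variety separating $U_{i,\mu(i)}$ from the ideal without ever analyzing copies inside $A_{\mu(i)}$.
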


\begin{proof}
Assume $\mu=(\mu_1,\ldots,\mu_s)$. And define $e = \lfloor \frac{\mu(i-1) - 1}{i - 1} \rfloor$. So 
\begin{align} \label{def:mu(i-1)} 
\mu(i-1) - 1 = e(i-1) + f \mbox{ where } 0 \leq f \leq i-2. 
\end{align}
Since $U_{i,\mu(i)} \neq \{ 0 \}$, we have $i < s$. Also from the condition $\mu(i) < \mu(i-1) + \lfloor \frac{\mu(i-1) - 1}{i - 1} \rfloor$, we have $\mu_i < e+1$. 

Now we look at the partition 
\[\tilde{\mu} = (\underbrace{e+2,\ldots,e+2}_\text{$f$ copies},\underbrace{e+1,\ldots,e+1}_\text{$(i-1-f)$ copies},\mu_i + 1, \mu_{i+1}, \mu_{i+2}, \ldots, \mu_{s-2},\mu_{s-1},\mu_s-1).\]
We will restate and use \cite[Lemma 4.1]{Weyman} below:

\begin{lem} \label{lem:rel2}
When $i \geq 1$, $U_{i,p}$ vanish on $X_{\mu}$ iff $p \geq \mu(i)$.
\end{lem}

Note that by ~\ref{def:mu(i-1)}, $\mu_1 + \mu_2 + \ldots + \mu_{i-1} = \tilde{\mu}_1 + \tilde{\mu}_2 + \ldots + \tilde{\mu}_{i-1}$ since
\begin{align*}
& \mu_1 + \mu_2 + \ldots + \mu_{i-1} = \mu(i-1) + (i-2) = e(i-1) + f + (i-1) = (e+1)(i-1) + f \\
& \tilde{\mu}_1 + \tilde{\mu}_2 + \ldots + \tilde{\mu}_{i-1} = (e+2)f + (e+1)(i-1-f) = (e+1)(i-1) + f.
\end{align*}
On the other hand, we have $\mu_1 \geq \mu_2 \geq \ldots \geq \mu_{i-1}$. And \[\tilde{\mu}_1 = \tilde{\mu}_2 = \ldots = \tilde{\mu}_f = \tilde{\mu}_{f+1} + 1 = \tilde{\mu}_{f+2} + 1 = \ldots = \tilde{\mu}_{i-1} + 1.\]
Hence $\mu_1 + \mu_2 + \ldots + \mu_j \geq \tilde{\mu}_1 + \tilde{\mu}_2 + \ldots + \tilde{\mu}_j$ for all $1 \leq j \leq i-1$. And we have $\mu(j) \geq \tilde{\mu}(j)$ for all $1 \leq j \leq i-1$. Now according to Lemma~{\ref{lem:rel}}, Lemma~{\ref{lem:rel1}} and the fact that $\mu(j) \geq \tilde{\mu}(j)$ for all $1 \leq j \leq i-1$, we have the ideal 
\begin{align*}
<U_{0,1},\ldots,U_{0,\mu_1},U_{1,\mu(1)},\ldots,U_{i-1,\mu(i-1)} > &= \;  <U_{0,1},\ldots,U_{0,\mu_1},V_{1,\mu(1)},\ldots,V_{i-1,\mu(i-1)}> \\
&\subseteq \; <U_{0,1},\ldots,U_{0,n},V_{1,\tilde{\mu}(1)},\ldots,V_{i-1,\tilde{\mu}(i-1)}> 
\end{align*}
vanish on a generic matrix in $X_{\tilde{\mu}}$. Here $n$ is the dimension of $E$.

On the other hand, by Lemma~{\ref{lem:rel2}}, $U_{i,\mu(i)}$ do not vanish on $X_{\tilde{\mu}}$ since $\mu(i) < \tilde{\mu}(i)$. Therefore we have
$U_{i,\mu(i)} \nsubseteq \; <U_{0,1},\ldots,U_{0,\mu_1},U_{1,\mu(1)},\ldots,U_{i-1,\mu(i-1)}>$.
\end{proof}

\begin{exmp}
Let $\mu = (4,2^3,1^5)$ and $\mu^T = (9,4,1,1) = $\ytableausetup{centertableaux}
\begin{ytableau}
\text{ } &  &  &  &  &  &  &  &  \\
 &  &  &  \\
 \\
 \\
\end{ytableau}.
In order to show $U_{3,6} \nsubseteq \;  <U_{0,1},U_{0,2},U_{0,3},U_{0,4},U_{1,4},U_{2,5}>$, we look at $\tilde{\mu} = (3^3,2,1^4)$ and $\tilde{\mu}^T = (8,4,3) = $\ytableausetup{centertableaux}
\begin{ytableau}
\text{ } &  &  &  &  &  &  &  \\
 &  &  & \\
 &  &
\end{ytableau}. Then the ideal $<U_{0,1},U_{0,2},U_{0,3},U_{0,4},U_{1,4},U_{2,5}>$ vanish on $X_{\tilde{\mu}}$ but $U_{3,6}$ do not.
\end{exmp}

The proof of the second part is similar to \cite[Section 1]{weyman2}.

\begin{prop} \label{prop:generate}
If $\mu(i) \geq \mu(i-1) + \lfloor \frac{\mu(i-1) - 1}{i - 1} \rfloor$, then we have
\[
U_{i,\mu(i)} \subset \; <U_{0,1},\ldots,U_{0,\mu_1},U_{1,\mu(1)},\ldots,U_{i-1,\mu(i-1)}>.
\]
\end{prop}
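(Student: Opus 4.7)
The proof follows the pattern of \cite[Section 1]{weyman2} and proceeds by induction on $\mu_1$.

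\textbf{Reduction.} By Lemma~\ref{lem:rel}, $U_{i,\mu(i)}$ is a direct summand of $V_{i,\mu(i)}$, and iterating Lemmas~\ref{lem:rel} and \ref{lem:rel1} shows that $V_{j,\mu(j)} \subseteq \langle U_{0,1},\ldots,U_{0,\mu_1}, U_{1,\mu(1)},\ldots, U_{j,\mu(j)} \rangle$ for every $j \leq i-1$. Hence the target ideal equals $\langle U_{0,1},\ldots,U_{0,\mu_1}, V_{i-1,\mu(i-1)} \rangle$, and it suffices to prove that $V_{i,\mu(i)}$ is contained in this ideal.

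\textbf{Induction setup.} Writing $\mu(i-1)-1 = e(i-1)+f$ with $0 \leq f \leq i-2$, a direct computation using $\hat{\mu}(j) = \mu(j)-j$ yields $\hat{\mu}(i-1)-1 = (e-1)(i-1)+f$, so $\lfloor (\hat{\mu}(i-1)-1)/(i-1)\rfloor = e-1$. The hypothesis $\mu(i) \geq \mu(i-1)+e$ therefore translates exactly to the analogous hypothesis $\hat{\mu}(i) \geq \hat{\mu}(i-1)+(e-1)$ for $\hat{\mu}$ at the same index $i$. By the inductive hypothesis, applied in the relative setting over $G/P_{\mu^T}$ with $\cQ_1$ playing the role of $E$, the Schur-type term $S_{(1^i,0^{n-\mu_1^T-2i},(-1)^i)}\cQ_1$ at degree $\hat{\mu}(i)$ is redundant in $F^{\hat{\mu}}(\cQ_1,\cQ_1^*)_1$, i.e., the corresponding generator of $J_{\hat{\mu}}$ over the relative symmetric algebra is expressed in terms of $U_{0,\cdot}$ and $U_{j,\hat{\mu}(j)}$ for $j \leq i-1$.

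\textbf{Descent.} Push this relative identity forward along the Koszul complex $\bigwedge^\bullet(\cR_1 \otimes E^*)$ governing the passage from $F^{\hat{\mu}}$ to $F^\mu$ in Section~\ref{sec:geometric}. By Lemma~\ref{lem:contribution}(2), the image in $F_0^\mu$ of the contribution from $S_{(1^i,0^{n-\mu_1^T-2i},(-1)^i)}\cQ_1$ lies in $\langle U_{0,\cdot}, V_{1,\mu(1)}, \ldots, V_{i,\mu(i)}\rangle$. Because the corresponding relative generator has been shown redundant inductively, the Schur-type $S_{(1^i,\ldots,(-1)^i)}$ piece that would otherwise produce $U_{i,\mu(i)}$ is absorbed by the lower-index $V_{j,\mu(j)}$ contributions, giving $V_{i,\mu(i)} \subseteq \langle U_{0,1},\ldots,U_{0,\mu_1}, V_{i-1,\mu(i-1)}\rangle$ as required.

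\textbf{Main obstacle.} The delicate step is the faithful transfer of the inductive identity from the relative symmetric algebra $\mathrm{Sym}(\cQ_1 \otimes \cQ_1^*)$, over the partial flag variety $G/P_{\mu^T}$, to a genuine ideal identity in the absolute coordinate ring $A = \mathrm{Sym}(E \otimes E^*)$. This requires careful tracking of Schur functor decompositions through the Koszul push-forward $\bigwedge^\bullet(\cR_1 \otimes E^*)$ to ensure that the inductive reduction produces terms lying in the specified ideal rather than merely in $J_\mu$; this mirrors the machinery of \cite[Section 1]{weyman2} now applied beyond the rectangular case.
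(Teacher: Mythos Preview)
Your proof has a genuine gap at the Descent step, and the paper's argument takes a substantially different route precisely to avoid it.

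Your inductive hypothesis says only that $U_{i,\hat\mu(i)}$ is redundant among the generators of $J_{\hat\mu}$, i.e.\ that the representation $S_{(1^i,\ldots,(-1)^i)}\cQ_1$ does not occur in $F^{\hat\mu}(\cQ_1,\cQ_1^*)_1$. But Lemma~\ref{lem:contribution} only bounds $F^\mu_1$ as a whole: the terms of $F^{\hat\mu}_1$ with index $j>i$ are still present, and their push-forwards land in ideals containing $V_{j,\mu(j)}$ for those larger $j$. Hence even after discarding the $i$-th contribution you cannot conclude that $U_{i,\mu(i)}$, which sits inside $J_\mu$ by Lemma~\ref{lem:rel2}, is expressible using only $U_{j,\mu(j)}$ with $j\le i-1$. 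Your last paragraph essentially concedes this: the ``faithful transfer'' of an ideal identity from $\mathrm{Sym}(\cQ_1\otimes\cQ_1^*)$ to $A$ through the Koszul complex is exactly the missing piece, and neither Lemma~\ref{lem:contribution} nor \cite{weyman2} supplies it. You also never treat the source in Lemma~\ref{lem:contribution}(1), the $(n-\mu_1^T+1)$-minors.

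The paper avoids all of this by passing to an auxiliary partition $\tilde\mu=(\mu_1,\ldots,\mu_{i-1},\mu_i^r,f)$, obtained by filling the tail with copies of $\mu_i$. The key is that for $\tilde\mu$ one can prove, by induction on $\mu_i$ rather than on $\mu_1$, the \emph{stronger} statement that $J_{\tilde\mu}$ is generated entirely by $U_{0,\bullet}$ and $U_{j,\mu(j)}$ for $j\le i-1$. The base case reduces, by subtracting $\mu_i-1$ from each part, to a partition $\mu'$ with $\mu'(i)<i$, so that $U_{j,\mu'(j)}=0$ for all $j\ge i$ and Theorem~\ref{thm:generator} already gives the claim. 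In the inductive step Lemma~\ref{lem:contribution} then applies cleanly because every term of $F^{\hat{\tilde\mu}}_1$ has index $\le i-1$; the extra $(n-\tilde\mu_1^T+1)$-minors are handled separately, including a passage to a larger matrix size when $f=1$. Finally $U_{i,\mu(i)}=U_{i,\tilde\mu(i)}\in J_{\tilde\mu}$ yields the proposition. The introduction of $\tilde\mu$ is not cosmetic: it is what kills the $j>i$ contributions that block your direct induction on $\mu_1$.
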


\begin{proof}
Since $\mu(i) \geq \mu(i-1) + \lfloor \frac{\mu(i-1) - 1}{i - 1} \rfloor$, we have
\begin{align} \label{condition:generates}
 \mu_i - 1 \geq \lfloor \frac{\mu(i-1) - 1}{i - 1} \rfloor > \frac{\mu(i-1) - 1}{i - 1} - 1. 
\end{align}
Define 
\begin{equation} \label{def:tildeMu}
\tilde{\mu} = (\mu_1,\ldots,\mu_{i-1},\mu_i^r,f) \text{ where } n = \mu_1 + \ldots + \mu_{i-1} + r \mu_i + f \text{ and } 0 \leq f \leq \mu_i - 1. 
\end{equation}
We will show the following lemma
\begin{lem}
$J_{\tilde{\mu}}$ is generated by $\{ U_{0,1},\ldots,U_{0,\mu_1},U_{1,\mu(1)},\ldots,U_{i-1,\mu(i-1)} \}$.
\end{lem}

\begin{proof}
We will do induction on $\mu_i$. We can look at 
\[\mu ' = (\mu_1 - (\mu_i - 1),\mu_2 - (\mu_i-1),\ldots,\mu_{i-1} - (\mu_i - 1),1^r).\] 
Then 
\[ \mu ' (i) = \mu_1 + \ldots + \mu_{i-1} - (i-1)(\mu_i - 1) + 1 - i + 1 = \mu(i-1) - 1 - (i-1)(\mu_i - 1) + 1 < i \]
where the last inequality is due to ~\ref{condition:generates}. Hence $U_{i,\mu ' (i)} = \{0\}$. And by Theorem~{\ref{thm:generator}}, $J_{\mu '}$ is generated by $\{ U_{0,1},\ldots,U_{0,\mu^{\prime}_1},U_{1,\mu '(1)},\ldots,U_{i-1,\mu '(i-1)} \}$. When $\mu_i = 1$, $\mu ' = \mu$ and the conclusion holds. Let $\hat{\tilde{\mu}} = (\mu_1 - 1,\ldots,\mu_{i-1} - 1, (\mu_i - 1)^r,\text{max}(f-1,0))$. By the induction hypothesis, we can assume $J_{\hat{\tilde{\mu}}}$ is generated by $\{ U_{0,1},\ldots,U_{0,\hat{\tilde{\mu}}_1},U_{1,\hat{\tilde{\mu}}(1)},\ldots,U_{i-1,\hat{\tilde{\mu}}(i-1)} \}$. According to ~\ref{lem:contribution}, we have $J_{\tilde{\mu}}$ is generated by 
\[ \{ U_{0,1},\ldots,U_{0,\tilde{\mu}_1},U_{1,\tilde{\mu}(1)},\ldots,U_{i-1,\tilde{\mu}(i-1)} \} = \{ U_{0,1},\ldots,U_{0,\mu_1},U_{1,\mu(1)},\ldots,U_{i-1,\mu(i-1)} \} \]
and $(n - \tilde{\mu}^T_1 + 1) \times (n - \tilde{\mu}^T_1 + 1)$ minors of $\Phi$. Here $\tilde{\mu}$ is defined in \ref{def:tildeMu}. 

Let $k = \tilde{\mu}^T_1 = l(\tilde{\mu})$, we have $\tilde{\mu}(k) = n - \tilde{\mu}^T_1 + 1$. But $k + \tilde{\mu}(k) = n + 1 > n$. So $U_{k,\tilde{\mu}(k)} = \{0\}$. If $f \neq 1$, then $\tilde{\mu}(k-1) = \tilde{\mu}(k) - \mu_k + 1 < n - \tilde{\mu}^T_1 + 1$. Hence by Theorem ~\ref{thm:generator}, the only possible generator in degree $(n - \tilde{\mu}^T_1 + 1)$ is the invariant $U_{0,n - \tilde{\mu}^T_1 + 1}$. This is contained in $V_{1,n - \tilde{\mu}^T_1 + 1}$ which is contained in the ideal generated by $\{U_{0,\tilde{\mu}_1},U_{1,\tilde{\mu}_1} = U_{1,\tilde{\mu}(1)} \}$ according to Lemma~{\ref{lem:rel}} and Lemma~{\ref{lem:rel1}}. 

However, when $f = 1$, $\tilde{\mu}(k-1) = n - \tilde{\mu}^T_1 + 1$ and $\tilde{\mu}(k-2) < n - \tilde{\mu}^T_1 + 1$. So we only need to show that $V_{k-1,\tilde{\mu}(k-1)}$ is contained in $< U_{0,1},\ldots,U_{0,\mu_1},U_{1,\mu(1)},\ldots,U_{i-1,\mu(i-1)}> =  < U_{0,1},\ldots,U_{0,\mu_1},V_{1,\mu(1)},\ldots,V_{i-1,\mu(i-1)} >$. Let $m = n + \mu_i - 2 \geq n$ and look at
\[
\mu '' = (\mu_1,\mu_2,\ldots,\mu_{i-1},\mu_i^r,\mu_i)
\]
which is a partition of $m$. Note that this only differs from $\tilde{\mu}$ in \ref{def:tildeMu} in the $k$-th entry. But this falls into the situation of the previous paragraph with $f = 0$. Hence we have $V_{k-1,\tilde{\mu}(k-1)}$ is generated by $\{ U_{0,1},\ldots,U_{0,\mu_1},V_{1,\mu(1)},\ldots,V_{i-1,\mu(i-1)} \}$ for spaces of $m \times m$ matrices by previous argument. Now notice that using a basis of $V_{i,p}$ given by \ref{eqn:v_i,p}, we can get a basis of $V_{i,p}$ for space of $n \times n$ matrices with $n \leq m$ by taking a basis of $V_{i,p}$ for space of $m \times m$ matrices and restricting them on the variables $\{ \phi_{i,j} \} \mid_{1 \leq i,j \leq n}$. Hence we also have $V_{k-1,\tilde{\mu}(k-1)}$ is generated by $\{ U_{0,1},\ldots,U_{0,\mu_1},V_{1,\mu(1)},\ldots,V_{i-1,\mu(i-1)} \}$ for spaces of $n \times n$ matrices.
\end{proof}

By Lemma~{\ref{lem:rel2}}, $U_{i,\mu(i)} = U_{i,\tilde{\mu}(i)}$ is contained in $J_{\tilde{\mu}} = <U_{0,1},\ldots,U_{0,\mu_1},U_{1,\mu(1)},\ldots,U_{i-1,\mu(i-1)}>$. This concludes the proof.
\end{proof}

\begin{proof}[Proof of Theorem~{\ref{thm:main}}]
On the one hand, by Lemma~{\ref{lem:minimal}}, we see that the set of equations described in Theorem~{\ref{thm:main}} is a minimal set of equations, i.e., they do not generate each other. On the other hand, using Proposition~{\ref{prop:generate}} inductively on $i$, we conclude that using only the set of equations in Theorem~{\ref{thm:main}}, we can generate all equations described in Theorem~{\ref{thm:generator}}. This concludes the whole proof.
\end{proof}

\begin{rmk}
In \cite{ES}, Eisenbud and Saltman generalized the notion of nilpotent orbit closure to rank varieties. Given a partition $\mu$ of $l$ with $l \leq n$, we can consider the variety consisting of all endomorphisms $\phi \colon E \rightarrow E$ such that ker $\phi^i \geq \mu^T_1 + \ldots \mu^T_i$ for all $i$. The whole proof is also valid for rank varieties. And the minimal set of generators defining the rank variety scheme theoretically is given by $U_{0,p}$($n - l + 1 \leq p \leq n - l + \mu_1$) and $U_{i,\mu(i)}$ for which the Equation~{\ref{eqn:main}} holds.
\end{rmk}

\end{section}

\small \noindent Hang Huang, Department of Mathematics,
University of Wisconsin Madison, Madison, WI 53706 \\
{\tt hhuang235@math.wisc.edu}, \url{http://math.wisc.edu/~hhuang235/}

\end{document}